\def\0{\mathbf 0}
\def\1{\mathbf 1}
\numberwithin{equation}{section}
\newcounter{thm}[section]
\numberwithin{thm}{section}
\newtheorem{theorem}[thm]{Theorem}
\newtheorem{lemma}[thm]{Lemma}
\newtheorem{corollary}[thm]{Corollary}
\newtheorem{definition}[thm]{Definition}
 \title{Measure Algebras}
 \author{Thomas Jech \\
    e-mail: jech@math.psu.edu
    }
\begin{document}

\maketitle

These notes for the 2016 Winter School in Hejnice contain the complete proof of the following result \cite{J}:

\begin{theorem} \label{Thm}
A Boolean $\sigma$-algebra $B$ is a measure algebra if and only if
it is weakly distributive and uniformly concentrated.
\end{theorem}

\section{Boolean $\sigma$-algebras}\label{S1}

A \emph{Boolean algebra} is an algebra $B$ of subsets of a given
nonempty set $S$, with Boolean operations $a \cup b$, $a \cap b$,
$-a=S-a$, and the zero and unit elements $\mathbf 0=\emptyset$ and
$\mathbf 1=S$. A \emph{Boolean $\sigma$-algebra} is a Boolean
algebra $B$ such that every countable set $A\subset B$ has a
supremum $\sup A = \bigvee A$ (and an infimum $\inf A = \bigwedge
A$) in the partial ordering of $B$ by inclusion.

\begin{definition}\label{meas}
A \emph{measure} (a
 strictly positive
$\sigma$-additive probability measure) on a Boolean
$\sigma$-algebra $B$ is a real valued function $m$ on $B$ such
that
\begin{itemize}
\item [(i)] $m(\mathbf 0) = 0,\ m(a) > 0 \text{ for } a \not = \mathbf 0
        \text{, and } m(\mathbf 1) = 1 $
\item [(ii)]$ m(a)\le m(b) \text{ if } a\subset b$
\item [(iii)] $m(a \cup b) =m(a) + m(b) \text{ if } a\cap b =\mathbf
0$
\item [(iv)]
$m(\bigvee_{n=1}^\infty a_n) = \sum_{n=1}^\infty m(a_n)
        \text{ whenever the } a_n \text{ are pairwise disjoint.}$
\end{itemize}

A \emph{measure algebra} is a Boolean $\sigma$-algebra that
carries a measure.
\end{definition}

A function $m$ that satisfies (i), (ii) and (iii) above is called a
\emph{(strictly positive) finitely additive  measure}. And $m$
satisfies (iv) if and only if it is \emph{continuous}:
\begin{itemize}
\item [(iv)] if  $\{a_n\}_n$ is a decreasing sequence in $B$ with
$\bigwedge_{n=1}^\infty a_n = \0$ then $\lim_n m(a_n)=\0.$
\end{itemize}

Let $B$ be a Boolean algebra and let $B^+ = B - \{\mathbf 0\}$. A
set $A \subset B^+$ is an \emph{antichain} if $a\cap b =\mathbf 0$
whenever $a$ and $b$ are distinct elements of $A$. A
\emph{partition} $W$ (of $\mathbf 1$) is a maximal antichain, i.e.
an antichain with $\bigvee W =\mathbf 1$. $B$ satisfies the
\emph{countable chain condition} (ccc) if it has no uncountable
antichains. Every measure algebra is ccc because $B^+=
\bigcup_{n=1}^\infty \{C_n\}_n$ where $C_n=\{a:m(a)\ge 1/n\}$
and every antichain in $C_n$ has size at most $n$.

If $\{a_n\}_n$ is a sequence in a Boolean $\sigma$-algebra $B$,
one defines $$ \limsup_n a_n = \bigwedge_{n=1}^\infty \bigvee_{k =
n}^\infty a_k \text{ and }\liminf_n a_n = \bigvee_{n=1}^\infty
\bigwedge_{k = n}^\infty a_k,$$ and if $\limsup_n a_n = \liminf_n
a_n =a$, then $a$ is the \emph{limit} of the sequence, denoted
$\lim_n a_n$.

A sequence $\{a_n\}_n$ converges to $\0$ if and only if $\limsup a_n=\0$
if and only if there exist $ b_n\ge a_n$, $b_n$ decreasing, with $\bigwedge_n b_n=\0$.
A sequence $\{a_n\}_n$ converges to $\1$ if and only if $\liminf a_n=\1$.
(Exercise: If $\lim a_n=\lim b_n = \0$ then $\lim (a_n\cup b_n)=\0$).

If $\{a_n\}_n$ is an antichain then $\lim a_n = \0$.

In a measure algebra, if $\lim a_n = \0$ then $\lim m(a_n)=0$.
If $\sum m(a_n)<\infty$ then $\lim a_n =\0$.

\begin{definition}
A Boolean $\sigma$-algebra $B$ is \emph{weakly distributive} if
whenever $\{W_n\}_n$ is a sequence of countable maximal antichains then each $W_n$
has a finite subset $E_n$
such that $\lim_n \bigcup E_n = \1$.
\end{definition}

Equivalently, if for every $k$, $\{a_{kn}\}_n$ is an increasing sequence with
$\bigvee_n a_{kn}=\1$ then there exists a function $f:N\to N$ such that
$\lim a_{k,f(k)}=\1$.

Or, if for every $k$, $\{a_{kn}\}_n$ is a decreasing sequence with
$\bigwedge_n a_{kn}=\0$ then there exists a function $f:N\to N$ such that
$\lim a_{k,f(k)}=\0$.

\begin{definition}
A Boolean $\sigma$-algebra $B$ is \emph{uniformly weakly
distributive} if there exists a sequence of functions $\{F_n\}_n$
such that for each countable maximal antichain $W$, $F_n(W)$ is a finite subset of
W, and if $\{W_n\}_n$ is a sequence of countable maximal antichains then $\lim_n
\bigcup F_n(W_n) = \1$.
\end{definition}

If $B$ is a measure algebra then $B$  is uniformly weakly
distributive: For every $n$, if $W$ is a countable maximal antichain,
let $F_n(W)=E\subset W$ be such that $m(\bigcup E)\ge 1-1/2^n$.

\begin{definition}
A Boolean $\sigma$-algebra $B$ is \emph{concentrated}
if for every sequence $\{A_n\}$ of finite antichains
with $|A_n|\ge 2^n$ there exist $a_n\in A_n$ such that $\lim a_n = \0$.

$B$ is \emph{uniformly concentrated} if there exists a function $F$
such that for each finite antichain
$A$, $F(A)$ is an element of $A$, and if $A_n$ is a sequence of
finite antichains with $|A_n| \ge 2^n$ then $\lim_n F(A_n) = \0$.
\end{definition}

A measure algebra is uniformly concentrated: if $A$ is a finite
antichain, let $F(A)$ be an element of $A$ of least measure
(then $m(F(A))\leq 1/|A|$.)

\begin{theorem} \label{Thm}
A Boolean $\sigma$-algebra $B$ is a measure algebra if and only if
it is weakly distributive and uniformly concentrated.
\end{theorem}

\section{Fragmentations}

\begin{definition}
A \emph{fragmentation} of $B$ is a sequence of subsets
$C_1\subset C_2\subset...\subset C_n\subset ...$ such that
$\bigcup_n C_n =B^+$ and for every $n$, if $a\in C_n$ and $a\le b$ then $b\in C_n$.

A fragmentation is \emph{$\sigma$-finite cc} if for every $n$,
every antichain $A\subset C_n$ is finite.

A fragmentation is \emph{$\sigma$-bounded cc} if for every $n$
there is a constant $K_n$ such that every antichain $A\subset C_n$
has size $\le K_n$.

A fragmentation is $G_\delta$ if for every $n$, no sequence
in $C_n$ converges to $\0$.

A fragmentation is \emph{tight} if whenever $\{a_n\}_n$ is a sequence such that
$a_n\notin C_n$ for every $n$, then $\lim a_n=\0$.

A fragmentation is \emph{graded} if for every $n$, whenever
$a\cup b\in C_n$ then either $a\in C_{n+1}$ or $b\in C_{n+1}$.
\end{definition}

(The name $G_\delta$ comes from the fact that if $B$ is weakly
distributive and the fragmentation is $G_\delta$ then the set
$\{\0\}$ is a $G_\delta$ set in the convergence topology.)
A $G_\delta$ fragmentation is $\sigma$-finite cc.

In a measure algebra, the fragmentation defined by
$C_n = \{a: m(a)\ge 1/2^n\}$ has all the above properties.

\begin {lemma}
(a) If $B$ is uniformly concentrated then $B$ has a tight
$\sigma$-bounded cc fragmentation.

(b) If $B$ is weakly distributive and has a tight $\sigma$-finite
cc fragmentation then $B$ is uniformly weakly distributive.
\end{lemma}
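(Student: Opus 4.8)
The plan is to prove each part by constructing the required object directly from the hypothesis and then checking its defining clauses, reducing every convergence statement to the uniform convergence already available: the concentration function in (a), and tightness together with weak distributivity in (b).

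\emph{Part (a).} Let $F$ be a uniform concentration function. For $n\ge 1$ set $R_n=\{F(A):A\text{ a finite antichain},\ |A|\ge 2^n\}$, let $\overline R_n=\{a\in B^+:a\le b\text{ for some }b\in R_n\}$ be its downward closure in $B^+$, and put $C_n=B^+\setminus\overline R_n$. I would then check four things. First, each $C_n$ is upward closed because its complement $\overline R_n$ is downward closed by construction, and $C_n\subseteq C_{n+1}$ because every antichain of size $\ge 2^{n+1}$ also has size $\ge 2^n$, so $R_{n+1}\subseteq R_n$ and hence $\overline R_{n+1}\subseteq\overline R_n$. Second, $\bigcup_n C_n=B^+$: if $a\in\bigcap_n\overline R_n$, choose for each $n$ an antichain $A_n$ with $|A_n|\ge 2^n$ and $a\le F(A_n)$; concentration gives $F(A_n)\to\0$, so $a\le\bigwedge_n F(A_n)\le\limsup_n F(A_n)=\0$, i.e. $a=\0$. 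Third, $\sigma$-bounded cc holds with $K_n=2^n-1$: a finite antichain $A\subseteq C_n$ with $|A|\ge 2^n$ would give $F(A)\in A\subseteq C_n$ while $F(A)\in R_n\subseteq\overline R_n$, a contradiction, and an infinite antichain in $C_n$ would contain such finite pieces. Fourth, the fragmentation is tight: if $a_n\notin C_n$, i.e. $a_n\in\overline R_n$, for every $n$, pick $A_n$ with $|A_n|\ge 2^n$ and $a_n\le F(A_n)$; then $F(A_n)\to\0$ forces $a_n\to\0$. Here the concentration function does all the work and there is no serious obstacle beyond organizing these checks.

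\emph{Part (b).} The idea is to read canonical finite subsets off the fragmentation. Since the fragmentation is $\sigma$-finite cc, for every countable maximal antichain $W$ and every $k$ the set $W\cap C_k$ is a finite antichain, these sets increase with $k$, and $\bigcup_k(W\cap C_k)=W$; hence $\bigvee(W\cap C_k)\uparrow\1$, equivalently the leftover $S^W_k=\bigvee(W\setminus C_k)$ decreases to $\0$. I would set $F_n(W)=W\cap C_{g(W,n)}$ for a threshold $g(W,n)$ depending only on $W$ and $n$, so that $F_n(W)$ is automatically a finite subset of $W$, and I would try to choose $g(W,n)$ as the least $k$ for which the leftover $S^W_k$ lies outside $C_n$. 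Granting such a choice, for any sequence $\{W_n\}$ of countable maximal antichains the leftovers satisfy $S^{W_n}_{g(W_n,n)}\notin C_n$, so tightness gives $S^{W_n}_{g(W_n,n)}\to\0$, and therefore $\bigcup F_n(W_n)=\1-S^{W_n}_{g(W_n,n)}\to\1$, which is exactly the uniform weak distributivity to be proved.

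The main obstacle is justifying this choice of $g(W,n)$, that is, guaranteeing that the decreasing sequence $S^W_k$ genuinely leaves $C_n$. Tightness alone controls only single elements that escape the fragmentation and says nothing about the infinite join $S^W_k$ of the dropped pieces, which can remain large even while decreasing to $\0$ in the order; in fact the naive choice $F_n(W)=W\cap C_n$ already fails in a measure algebra, since partitioning $\1$ into $2^{2n}$ equal pieces leaves $W\cap C_n$ empty. This is where weak distributivity must enter: applied to the canonical increasing sequences $\{\bigvee(W\cap C_k)\}_k$ it produces, for any prescribed sequence of antichains, thresholds along which the leftover tends to $\0$, and the crux is to promote this sequence-dependent modulus to the uniform, antichain-dependent threshold $g(W,n)$, exploiting that the profile of $W$ below level $k$ is the finite antichain $W\cap C_k$. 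I expect this uniformization—combining weak distributivity's control of the infinite joins with tightness's control of the individual escaping elements—to be the heart of the argument.
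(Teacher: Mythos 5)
Your part (a) is correct and is essentially the paper's own argument: your $C_n=B^+\setminus\overline R_n$ is exactly the paper's ``set of all $a\ne\0$ such that there is no antichain $A$ with $|A|\ge 2^n$ and $a\le F(A)$'', and your four verifications match the paper's.

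Part (b) has a genuine gap, and you have located it yourself: the existence of the threshold $g(W,n)$, i.e.\ of a $k$ with $S^W_k=\bigvee(W\setminus C_k)\notin C_n$, is an instance of the $G_\delta$ property of the fragmentation (no sequence in $C_n$ converges to $\0$) applied to the decreasing sequence $S^W_k\downarrow\0$ --- and that property is not among the hypotheses; tightness is only the converse implication and says nothing about when elements \emph{leave} $C_n$. Weak distributivity cannot rescue this either: it handles one prescribed countable sequence of antichains at a time and yields a modulus for that sequence, whereas $g(W,n)$ must be defined uniformly for every $W$ before any sequence $\{W_n\}$ is given, so there is no way to ``promote'' the sequence-dependent modulus as you hope. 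The missing idea is that one should not try to push the \emph{infinite} leftover join out of $C_n$ at all. The paper uses $\sigma$-finite cc directly: for each $W$ and $n$ there is a finite $E\subset W$ such that \emph{every finite} union of elements of $W-E$ lies outside $C_n$ (otherwise one extracts pairwise disjoint nonempty finite sets $E_k\subset W$ with each $\bigcup E_k\in C_n$, giving an infinite antichain inside $C_n$); one sets $F_n(W)=E$. Weak distributivity enters only at the end, producing finite $E_n\subset W_n$ with $\lim_n\bigcup E_n=\1$, and tightness is applied to $a_n=\bigcup E_n-\bigcup F_n(W_n)$, which is a \emph{finite} union of elements of $W_n-F_n(W_n)$ and hence not in $C_n$. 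If you want to keep your formulation, redefine $g(W,n)$ as the least $k$ such that every finite union from $W\setminus C_k$ avoids $C_n$ (such a $k$ exists because the paper's finite $E$ is contained in some $C_k$), and apply tightness to the finite difference rather than to the infinite leftover; but that is precisely the paper's proof.
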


Consequently, a weakly distributive, uniformly concentrated
$\sigma$-algebra is uniformly weakly distributive.

\begin{proof}
(a) Let $F$ be a function on finite antichains that witnesses
that $B$ is uniformly concentrated. For every $n$ let
$C_n$ be the set of all $a\neq \0$ such that there is no antichain
$A$ with $|A|\ge 2^n$ and $a\le F(A)$.

It is easy to see that $C_n$ is a fragmentation: If $a\notin\bigcup_n C_n$
then for every $n$ there exists an antichain $A_n$ with $|A_n|\ge 2^n$ and $a\le F(A_n)$.
Hence $a\le \lim F(A_n)=\0$, and so $a=\0$. The fragmentation
is tight
because if $a_n\notin C_n$ then $a_n\le F(A_n)$ for some
$A_n$ and so $\lim a_n=\0$.

It is $\sigma$-bounded cc because if $A\subset C_n$ is
an antichain then $F(A)\in C_n$ and so $|A|<2^n$.

(b) Let $B$ be weakly distributive and let $\{C_n\}_n$ be a tight $\sigma$-finite
cc fragmentation; we shall find the functions $F_n$
witnessing that $B$ is uniformly weakly distributive.

Let $n\in N$, and let $W$ be a countable maximal antichain.
We claim that there exists a finite set $E\subset W$ such that
for every finite $E'\subset W-E$, $\bigcup E'\notin C_n$:
otherwise we find an infinite sequence $\{E_k\}_k$ of pairwise disjoint nonempty subsets
of $W$ with $\bigcup E_k \in C_n$, an infinite antichain in $C_n$.
We let $F_n(W)$ be this $E$.

Now let $W_n$, $n\in N$, be countable maximal antichains. We
claim that $\lim \bigcup F_n(W_n)=\1$. Since $B$ is weakly distributive,
there exist finite sets $E_n\subset W_n$ such that $\lim \bigcup E_n =\1$.
For each $n$ let $a_n = \bigcup E_n -\bigcup F_n(W_n)$.
By the claim above, $a_n\notin C_n$. Because $\{C_n\}_n$ is
tight, we have $\lim a_n=\0$,  and because
$\bigcup F_n(W_n)\ge \bigcup E_n \cap (-a_n)$, we have
$\lim \bigcup F_n(W_n) =
\lim \bigcup E_n=\lim -a_n=\1$.

\end{proof}

\begin{lemma}
A Boolean $\sigma-$algebra is uniformly weakly distributive if and only if it
has a tight $G_\delta$ fragmentation.
\end{lemma}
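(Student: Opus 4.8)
The plan is to prove both implications, treating the ``only if'' direction (building a tight $G_\delta$ fragmentation from the selectors $F_n$) as the substantial one and reducing the ``if'' direction to part~(b) of the preceding lemma.

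For the easier direction, suppose $B$ has a tight $G_\delta$ fragmentation $\{C_n\}_n$. I would first check that $B$ is weakly distributive: given, for each $k$, a decreasing sequence $\{a_{kn}\}_n$ with $\bigwedge_n a_{kn}=\0$, this sequence converges to $\0$ as $n\to\infty$, so by the $G_\delta$ property only finitely many of its terms can lie in $C_k$ (an infinite subsequence inside $C_k$ would itself converge to $\0$). Hence I can choose $f(k)$ with $a_{k,f(k)}\notin C_k$, and tightness gives $\lim_k a_{k,f(k)}=\0$, which is exactly weak distributivity in the decreasing form. Since a $G_\delta$ fragmentation is $\sigma$-finite cc, part~(b) of the preceding lemma applies and yields that $B$ is uniformly weakly distributive.

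For the main direction, let $\{F_n\}_n$ witness uniform weak distributivity and write $G_n(W)=\1-\bigcup F_n(W)$, so that uniform weak distributivity says precisely $\lim_n G_n(W_n)=\0$ for every sequence of countable maximal antichains $W_n$. I would set $D_n=\{a\in B^+: a\le G_n(W)\text{ for some countable maximal antichain }W\}$, put $\tilde C_n=B^+\setminus D_n$, and define the fragmentation by the cumulative union $C_n=\bigcup_{k\le n}\tilde C_k$ (the union is there only to force monotonicity in $n$, since $D_{n+1}\subseteq D_n$ is not evident). Each $D_n$ is downward closed, so each $\tilde C_n$, hence each $C_n$, is upward closed; moreover $\bigcup_n C_n=B^+$, because an element lying in every $D_n$ would pick out partitions $W_n$ with $a\le\bigwedge_n G_n(W_n)\le\limsup_n G_n(W_n)=\0$. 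Tightness is immediate: if $a_n\notin C_n$ and $a_n\ne\0$ then $a_n\in D_n$, so $a_n\le G_n(W_n)$ for suitable $W_n$ and $\limsup_n a_n\le\limsup_n G_n(W_n)=\0$.

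The crux is the $G_\delta$ property. By a pigeonhole over $k\le n$ it suffices to show that no decreasing sequence $\{b_j\}_j$ inside a single $\tilde C_k$ has $\bigwedge_j b_j=\0$ (upward closure of $\tilde C_k$ lets me replace an arbitrary sequence converging to $\0$ by a decreasing one). The key construction is to read off from such a sequence the partition $W_0=\{-b_1\}\cup\{b_j-b_{j+1}:j\in N\}$ (discarding $\0$'s), which is a countable maximal antichain precisely because $\bigwedge_j b_j=\0$. Now $F_k(W_0)$ is a \emph{finite} subset of $W_0$, so it omits every layer $b_j-b_{j+1}$ past some index $J$; therefore $\bigcup F_k(W_0)\le -b_{J+1}$, giving $b_{J+1}\cap\bigcup F_k(W_0)=\0$. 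But $b_{J+1}\in\tilde C_k$ means exactly $b_{J+1}\not\le G_k(W_0)$, i.e.\ $b_{J+1}\cap\bigcup F_k(W_0)\ne\0$ --- a contradiction. I expect this finiteness-of-$F_k(W_0)$ step to be the whole difficulty: everything else is bookkeeping, but here the fixed selector $F_k$, evaluated on the partition tailored to the given sequence, is converted into a uniform obstruction keeping $\tilde C_k$ bounded away from $\0$.
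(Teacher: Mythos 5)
Your proof is correct and takes essentially the same route as the paper's: your sets $\tilde C_k$ are exactly the complements of the paper's ``bad'' sets, your cumulative union $C_n=\bigcup_{k\le n}\tilde C_k$ coincides with the paper's $C_n$, and your $G_\delta$ argument via the partition $\{-b_1\}\cup\{b_j-b_{j+1}\}_j$ and the finiteness of $F_k(W_0)$ is precisely the paper's, as is the reduction of the converse to Lemma 2.2(b). The only cosmetic difference is that you pigeonhole over $k\le n$ where the paper works with the single finite set $F_1(W)\cup\cdots\cup F_n(W)$.
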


\begin{proof}
First assume that $B$ is uniformly weakly distributive and let $F_n$ be functions witnessing it.
For each $n$ we let $C_n$ be the set of all $a$ such that for some $k\le n$
$a\cap \bigcup F_k(W)\ne \0$
for every countable maximal antichain $W$.

To show that $\{C_n\}_n$ is a fragmentation, we show that $\bigcup_n C_n=B^+$:
if $a\notin C_n$ for all $n$ then for all $k$ there is a $W_k$ such that
$a\cap \bigcup F_k(W_k)=\0$, and because $\lim (a\cap \bigcup F_k(W_k))=a$
we have $a=\0$.

To show that $\{C_n\}_n$ is tight, let $a_n\notin C_n$ for each $n$.
For each $n$ there is a $W_n$ such that $a_n\cap b_n=\0$ where
$b_n=\bigcup F_n(W_n)$. Since $\lim b_n=\1$, we have $\lim -b_n=\0$,
and because $a_n\le -b_n$, it follows that $\lim a_n=\0$.

To show that $\{C_n\}_n$ is $G_\delta$, let $n\in N$ and let
$\lim a_k=\0$; it suffices to find a $k\in N$ such that $a_k \notin C_n$.
We may assume that $\{a_k\}_k$ is strictly decreasing and let
$W$ be the maximal antichain $\{a_{k-1}-a_k\}_k$ where
$a_0=\1$. Let $E=F_1(W)\cup ... \cup F_n(W)$. There exists
a $k$ large enough so that $a_k\cap \bigcup E=\0$. It follows
that $a_k \notin C_n$.

For the converse, let $\{C_n\}_n$ be a tight $G_\delta$
fragmentation. In view of Lemma 2.2 (b) it suffices to show
that $B$ is weakly distributive.
For every $k$, let $\{a_{kn}\}_n$ be a decreasing sequence with
$\bigwedge_n a_{kn}=\0$. We shall find  a function $f:N\to N$ such that
$\lim a_{k,f(k)}=\0$. Given $k\in N$, there is some $f(k)$ such that
$a_{k,f(k)}\notin C_k$, as $\{C_n\}_n$ is $G_\delta$.
Because $\{C_n\}_n$ is tight, $\lim a_{k,f(k)}=\0$ follows.

\end{proof}

A tight $G_\delta$ fragmentation is essentially unique: if $C_n$
and $C'_n$ are such, then for each $n$ there is a $k$ such that
$C_n\subset C'_k$. (If not, there exists an $n$ such that for all $k$
there is some $a_k\in C_n - C'_k$ and so $\lim a_k = \0$).

In the appendix we use a tight $G_\delta$ fragmentation to construct
a continuous submeasure, thus showing that $B$ is uniformly weakly distributive
if and only if $B$ is a Maharam algebra. The construction of a measure
(in the next two chapters) is considerably more difficult.

\begin{lemma}
If $\{C_n\}_n$ is a $G_\delta$ fragmentation of $B$ and if $B$ is concentrated
then $\{C_n\}_n$ is $\sigma-$bounded cc.
\end{lemma}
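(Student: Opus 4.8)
The plan is to argue by contradiction, exploiting the tension between concentration, which produces a sequence drawn from large antichains that converges to $\0$, and the $G_\delta$ property, which forbids any sequence living in a single $C_n$ from converging to $\0$.

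Suppose $\{C_n\}_n$ fails to be $\sigma$-bounded cc. Since the fragmentation is $G_\delta$, it is already $\sigma$-finite cc, so every antichain contained in a given $C_n$ is finite; the failure of $\sigma$-boundedness therefore means that for some fixed $n$ the finite antichains inside $C_n$ have unbounded cardinality. First I would fix such an $n$ and, for each $k$, choose a finite antichain $A_k \subset C_n$ with $|A_k| \ge 2^k$. This is possible precisely because the sizes of antichains inside $C_n$ are unbounded.

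Next I would apply the hypothesis that $B$ is concentrated to the sequence $\{A_k\}_k$. Since each $A_k$ is a finite antichain with $|A_k| \ge 2^k$, concentration yields elements $a_k \in A_k$ with $\lim_k a_k = \0$. The crucial observation is that every $a_k$ lies in the one fixed set $C_n$, because $A_k \subset C_n$. Thus $\{a_k\}_k$ is a sequence in $C_n$ converging to $\0$, which directly contradicts the $G_\delta$ property at level $n$. This contradiction shows that $\{C_n\}_n$ is $\sigma$-bounded cc.

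I do not expect a genuine obstacle here: the whole argument rests on matching the exponential growth condition $|A_k| \ge 2^k$ in the definition of concentration with antichains pulled from a single level $C_n$ of the fragmentation. The only point requiring a moment of care is the reduction to one fixed $n$ — that the negation of $\sigma$-boundedness localizes to a single level rather than being spread across the whole fragmentation — and this follows immediately from the $\sigma$-finite cc property that a $G_\delta$ fragmentation already enjoys, since unbounded \emph{finite} sizes must occur within some one $C_n$.
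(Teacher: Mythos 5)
Your proof is correct and is essentially the paper's own argument: negate $\sigma$-bounded cc to get, for some fixed $n$, finite antichains $A_k\subset C_n$ with $|A_k|\ge 2^k$, apply concentration to extract $a_k\in A_k$ with $\lim a_k=\0$, and contradict the $G_\delta$ property at level $n$. Your extra remark that $G_\delta$ implies $\sigma$-finite cc (so the large antichains may be taken finite) is a small but legitimate point of care that the paper glosses over.
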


\begin{proof}
By contradiction, assume that for some $n$, there exist arbitrarily large
finite antichains in $C_n$, and for each $k$, let  $A_k$ be an antichain
in $C_n$ of size $\ge 2^k$. Since $B$ is concentrated, there exist
$a_k\in A_k$ with $\lim a_k=\0$, a contradiction.
\end{proof}

\begin{lemma}
If $\{C_n\}_n$ is a tight $G_\delta$ fragmentation then for every $n$
there exists a $k>n$ such that for every $c\in C_n$, if $c=a\cup b$
then either $a\in C_k$ or $b\in C_k$.

Thus if $B$ has a tight $G_\delta$ fragmentation then $B$ has
one that is also graded (i.e. $a\cup b\in C_n$ implies that either
 $a\in C_{n+1}$ or $b\in C_{n+1}$.)
\end{lemma}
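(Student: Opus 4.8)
The plan is to establish the first assertion by contradiction and then obtain the graded fragmentation by passing to a cofinal subsequence of $\{C_n\}_n$.

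For the first assertion I would fix $n$ and suppose that no suitable $k$ exists. Then for every $k>n$ I can select $c_k\in C_n$ together with a splitting $c_k=a_k\cup b_k$ for which $a_k\notin C_k$ and $b_k\notin C_k$. The goal is to apply tightness to conclude $\lim_k a_k=\0$ and $\lim_k b_k=\0$, but tightness is stated for sequences whose $j$-th term lies outside $C_j$, whereas here the failure $a_k\notin C_k$ is only available for $k>n$. This index mismatch is the one place that requires care, and it is the point I expect to be the main (if modest) obstacle. It is resolved by the nesting $C_j\subset C_{n+j}$: putting $a'_j=a_{n+j}$ we have $a_{n+j}\notin C_{n+j}$, hence $a_{n+j}\notin C_j$, so $\{a'_j\}_j$ meets the hypothesis of tightness and $\lim_j a'_j=\0$; likewise $\lim_j b'_j=\0$ with $b'_j=b_{n+j}$.

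By the exercise that $\lim a_n=\lim b_n=\0$ implies $\lim(a_n\cup b_n)=\0$, the sequence $c'_j=a'_j\cup b'_j=c_{n+j}$ then converges to $\0$. But every $c'_j$ lies in $C_n$, and this contradicts the $G_\delta$ property, which forbids any sequence in $C_n$ from converging to $\0$. This proves the first assertion; for each $n$ I fix such a $k$ and call it $k(n)$, noting $k(n)>n$.

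For the gradedness I would define a cofinal subsequence recursively by $n_0=1$ and $n_{m+1}=k(n_m)$, and set $D_m=C_{n_m}$. Since $\{n_m\}$ is strictly increasing (so $n_m>m$) and unbounded, $\{D_m\}_m$ inherits from $\{C_n\}_n$ the property of being a fragmentation ($\bigcup_m D_m=B^+$, with each $D_m$ upward closed), of being $G_\delta$ (each $D_m$ is some $C_{n_m}$), and of being tight (if $a_m\notin D_m=C_{n_m}$ then $a_m\notin C_m$ because $C_m\subset C_{n_m}$, so tightness of $\{C_n\}_n$ applies to $\{a_m\}_m$). Finally, gradedness is immediate from the choice of $k$: if $a\cup b\in D_m=C_{n_m}$, then by the first assertion either $a\in C_{k(n_m)}=C_{n_{m+1}}=D_{m+1}$ or $b\in D_{m+1}$. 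Everything after the reindexing in the first assertion is routine verification.
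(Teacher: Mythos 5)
Your proof is correct and follows essentially the same argument as the paper: assume no such $k$ exists, pick witnesses $c_k=a_k\cup b_k$ with $a_k,b_k\notin C_k$, apply tightness to get $\lim a_k=\lim b_k=\0$, hence $\lim c_k=\0$, contradicting the $G_\delta$ property of $C_n$. You additionally handle two points the paper leaves implicit --- the reindexing needed to apply tightness to a sequence starting at $k>n$, and the explicit cofinal subsequence construction for the graded fragmentation --- and both are handled correctly.
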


\begin{proof}
Otherwise, for every $k>n$ there exist $c_k=a_k\cup b_k \in C_n$
such that $a_k\notin C_k$ and $b_k\notin C_k$. By tightness,
$\lim a_k=\lim b_k=\0$ and so $\lim c_k=\0$, a contradiction.
\end{proof}

In conclusion, we proved in this chapter that a weakly distributive uniformly
concentrated Boolean $\sigma-$algebra has a graded
$\sigma-$bounded cc fragmentation.
In the next two chapters we construct a measure on $B$ under
the assumption that $B$ is weakly distributive and has a graded
$\sigma-$bounded cc fragmentation.

\section{Kelley's Theorem}

In this Section we introduce Kelley's condition for the existence
of finitely additive measure on a Boolean algebra. But first we show
how the measure problem reduces to finitely additive measures.

\begin {theorem} (Pinsker, Kelley)
A Boolean $\sigma$-algebra $B$ carries a measure if an only if it
is weakly distributive and carries a finitely additive measure.
\end{theorem}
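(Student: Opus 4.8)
The plan is to prove both implications, the forward one being essentially recorded already in the excerpt and the reverse one carrying all the weight. For the forward direction, if $B$ carries a measure $m$ then $m$ is in particular a strictly positive finitely additive measure, so it remains only to see that $B$ is weakly distributive. This is the computation already made for uniform weak distributivity: given countable maximal antichains $W_n$, enumerate each and choose a finite $E_n\subseteq W_n$ with $m(\bigcup E_n)\ge 1-2^{-n}$, which is possible since $\bigvee W_n=\1$ forces the measures of the finite unions to increase to $m(\1)=1$. Then $m(\1-\bigcup E_n)\le 2^{-n}$ is summable, so $\1-\bigcup E_n\to\0$ by the summability criterion noted earlier, i.e. $\bigcup E_n\to\1$, and $B$ is weakly distributive.

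For the reverse direction, assume $B$ is weakly distributive and carries a strictly positive finitely additive measure $m$. First I note that $B$ is ccc by the usual argument ($B^+=\bigcup_n\{a:m(a)\ge 1/n\}$, and an antichain inside $\{a:m(a)\ge 1/n\}$ has at most $n$ members), so every maximal antichain is countable and weak distributivity applies to all of them. The continuous measure will be manufactured as the \emph{countably additive part} of $m$: define $m_c(a)=\inf\{\sum_i m(a_i)\}$, the infimum taken over all partitions of $a$ into countably many pairwise disjoint pieces $a_i$. A routine double-inequality argument shows that $m_c$ is countably additive, monotone, and satisfies $\0\le m_c\le m$; the only property missing for $m_c/m_c(\1)$ to be a measure is \emph{strict positivity}.

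The main obstacle is precisely to show $m_c(a)>0$ for every $a\ne\0$, and this is where weak distributivity is indispensable. Suppose toward a contradiction that $m_c(a)=0$ with $a\ne\0$. Then for each $j$ there is a countable partition $P_j$ of $a$ with $\sum_{p\in P_j}m(p)<2^{-j}$; adjoining $-a$ makes each $P_j\cup\{-a\}$ a countable maximal antichain of $B$. Applying weak distributivity to this sequence and intersecting back with $a$, I obtain finite sets $E_j\subseteq P_j$ with $u_j:=\bigcup E_j=(\bigcup F_j)\cap a\to a$, while $m(u_j)\le\sum_{p\in P_j}m(p)<2^{-j}$ by finite additivity. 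Now $\lim_j u_j=a$ yields $\liminf_j u_j=a$, so the increasing sequence $r_n=\bigwedge_{j\ge n}u_j$ satisfies $\bigvee_n r_n=a$ and $m(r_n)\le m(u_n)<2^{-n}$. Since $r_n$ is increasing, $m(r_n)$ is nondecreasing yet bounded above by $2^{-n}\to 0$, forcing $m(r_n)=0$ and hence, by strict positivity of $m$, $r_n=\0$ for every $n$; but then $a=\bigvee_n r_n=\0$, a contradiction. Thus $m_c$ is strictly positive and $m_c/m_c(\1)$ is the desired measure.

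I expect the delicate bookkeeping to lie in verifying that $m_c$ is genuinely countably additive (a splitting-and-regrouping argument over double sequences of partitions) and in the passage between a partition of $a$ and a maximal antichain of $\1$ so that weak distributivity can be invoked. The conceptual heart, however, is the single application of weak distributivity in the crux: it converts the hypothesis ``$a$ admits partitions of arbitrarily small total mass'' into an increasing sequence exhausting $a$ whose measures vanish, which strict positivity of the finitely additive $m$ then forbids. It is worth emphasizing that this step genuinely uses weak distributivity, since without it the covering sets $u_j$ need not converge to $a$, and indeed a purely finitely additive phenomenon ($m_c(a)=0$) is then possible.
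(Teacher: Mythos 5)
Your proof is correct and follows essentially the same route as the paper: your $m_c$, defined as an infimum over countable partitions, coincides with the paper's $\mu(b)=\inf\{\lim_n m(b\cap u_n)\}$ taken over increasing sequences $\{u_n\}_n$ with $\bigvee_n u_n=\1$ (pass between a partition of $b$ and the complemented partial unions), and your use of weak distributivity to rule out $m_c(a)=0$ for $a\ne\0$ is the same diagonalization the paper performs, ending in an increasing sequence with supremum $a$ all of whose terms have $m$-measure $0$. The only cosmetic differences are that you verify $\sigma$-additivity directly by regrouping partitions where the paper checks finite additivity plus continuity, and that you invoke weak distributivity in its maximal-antichain form rather than for increasing sequences.
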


\begin{proof}
Let $m$ be a finitely additive measure on $B$. We let

$$ \mu(b) = \inf \{\lim_n m(b\cap u_n)\}$$
where the infimum is taken over all increasing sequences $\{u_n\}_n$
with $\bigvee_n u_n=\1$.

We show that $\mu$ is a $\sigma$-additive measure, and if $B$ is
weakly distributive then $\mu$ is strictly positive.

First show that $\mu(a\cup b)=\mu(a) + \mu(b)$ if $a\cap b=\0$.
If $s=\{u_n\}_n$, let $\mu_s(x)=\lim_n m(x\cap u_n)$. Clearly,
$\mu_s(a\cup b)=\mu_s(a)+\mu_s(b)$, and so $\mu(a)+\mu(b)\le
\mu_s(a\cup b)$. Hence $\mu(a)+\mu(b)\le \mu(a\cup b)$.

For each $\varepsilon>0$ there is a sequence $s=s_a=\{u_n\}_n$ such that
$\mu_s(a)\le \mu(a)+\varepsilon$, and similarly $s_b=\{v_n\}_n$.
Let $s=\{u_n\cap v_n\}_n$. Then
$\mu(a\cup b)\le\mu_s(a\cup b)=\mu_s(a)+\mu_s(b)\le \mu_{s_a}(a)+\mu_{s_b}(b)\le
\mu(a) + \mu(b)+2\varepsilon$, and the equality follows.

To show the continuity of $\mu$, let $a_n$ be a decreasing sequence
with $\bigwedge_n a_n=\0$; we show that $\lim \mu(a_n)=0$.
Let $\varepsilon>0$.

Let $M=\lim m(a_n)$, and let $K$ be such that $m(a_K)-M<
\varepsilon$. Let $s=\{u_n\}_n$ where $u_n=-a_n$. As for all
$k,n\ge K$, $m(a_k-a_n)<\varepsilon$, we have, for every
$k\ge K$, $\mu_s(a_k)=\lim_n m(a_k\cap u_n)=\lim_n m(a_k-a_n)
\le \varepsilon$, and hence $\mu(a_k)\le\varepsilon$.

Finally, assume that $B $ is weakly distributive, and let $b\in B$ be such that $\mu(b)
=0$; we show that $b=\0$. As $\mu(b)=0$, there is for each $k$
an increasing sequence $\{u_{kn}\}_n$ with $\bigvee_n u_{kn}=\1$
such that $\lim_n m(b\cap u_{kn})<1/k$.

By weak distributivity there is a function $f$ such that $\lim u_{k,f(k)}=\1$.
Hence $\bigvee_n \bigwedge_{k\ge n} u_{k,f(k)}=\1$. For each $k$ let $a_k=
b\cap u_{k,f(k)}$. We have $\bigvee_n \bigwedge_{k\ge n} a_k =b$.
But because $m(a_k)<1/k$, it follows that $\bigwedge_{k\ge n} a_k =\0$,
and so $b=\bigvee_n \bigwedge_{k\ge n} a_k=\0$.
\end{proof}

In order to state and prove Kelley's Theorem, we now work with
Boolean set algebras, and use the term ``finitely additive''
for measures that are not necessarily strictly positive - when we
need the condition $m(a)>0$ if $a\ne\0$ we call $m$ strictly positive.

Let $B$ be a Boolean set algebra, $B\subset P(S)$ for some set $S$.

\begin{definition}
Let $C$ be a subset of $B^+$. For every finite sequence $s=\langle
c_1,...c_n\rangle$ in $C$, let $\kappa_s=k/n$ where $k$ is the
largest size of a subset $J\subset\{1,...,n\}$ such that $\bigcap_{i\in J} c_i$
is nonempty. The \emph{intersection number} of $C$ is the infimum
$\kappa=\inf \kappa_s$ over all finite sequences $s$ in $C$.
\end{definition}

The sequences $s$ do not have to be nonrepeating.

Note that for any $n_0$, the infimum $\inf \kappa_s$ taken over all
sequences $s$ of length $n\ge n_0$ is still $\kappa$: if $s$ is a sequence
of length $n<n_0$, let $t$ be such that $t\cdot n\ge n_0$, and let $s^*$ be
a sequence we get when repeating each term of $s$ $t$-times. Then
$\kappa_{s^*}=\kappa_s$.

To better understand the significance of the intersection number,
assume that $m$ is a finitely additive measure on $B$, and let
$C\subset B^+$. Let $M$ be such that $m(c)\ge M$ for all $c\in C$.
We show that the intersection number $\kappa$ of $C$ is at least $M$.

Let $s=\langle  c_1,...c_n\rangle$ be a sequence in $C$. For each
$i\le n$, let $K_i$ be the characteristic function of $c_i$, i.e.
$K_i (x) =1$ if $x\in c_i$ and $=0$ if $x\notin c_i$. Let
$g=\sum_{i=1}^n K_i$ and consider $I_s = \int g \, dm$,
the area below the graph of $g$. Since $\int K_i=m(c_i)$,
we have $I_s=\sum_i m(c_i)\ge M\cdot n$ and so (because $m(S)=1$)
$||g||=\max_{x\in S} g(x)\ge I_s\ge M\cdot n$.

Thus there exists some $x\in S$ such that $\sum_i K_i(x)\ge M\cdot n$;
in other words, $x$ belongs to at least $M\cdot n$ members of the sequence.
Hence $\kappa_s\ge M\cdot n/n=M$ and consequently $\kappa\ge M$.

\begin{theorem}
(Kelley) Let $C\subset B^+$ have a positive intersection
number $\kappa$. Then there exists a finitely additive measure $m$ on $B$
such that $m(c)\ge \kappa$ for all $c\in C$.
\end{theorem}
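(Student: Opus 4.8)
The plan is to prove the statement in two stages: a \emph{finitary} stage that produces, for each finite subcollection of $C$, a finitely additive probability measure giving every member measure at least $\kappa$, followed by a \emph{compactness} stage that glues these together into a single measure working for all of $C$ simultaneously.

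For the compactness stage I would regard a finitely additive probability measure as a point of the cube $[0,1]^B$ with the product topology, which is compact by Tychonoff's theorem. The set $K$ of all finitely additive probability measures on $B$ is closed in $[0,1]^B$, since each defining requirement --- $m(\mathbf 0)=0$, $m(\mathbf 1)=1$, and additivity $m(a\cup b)=m(a)+m(b)$ on each disjoint pair --- constrains only finitely many coordinates and is therefore a closed condition; hence $K$ is compact. For each $c\in C$ the set $F_c=\{m\in K: m(c)\ge\kappa\}$ is closed. Thus it suffices to verify the finite intersection property: every $F_{c_1}\cap\dots\cap F_{c_n}$ is nonempty. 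That is exactly the content of the finitary stage, and compactness then yields a measure in $\bigcap_{c\in C}F_c$, which is the desired $m$.

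The finitary stage is the heart of the argument and the step I expect to be the main obstacle. Fix $c_1,\dots,c_n\in C$ and pass to the finite subalgebra they generate, with nonempty atoms $E_1,\dots,E_r$; write $K_i(E_j)=1$ if $E_j\subseteq c_i$ and $0$ otherwise. I would apply the von Neumann minimax theorem to the payoff matrix $[K_i(E_j)]$, obtaining
\[
\max_{m}\ \min_{i\le n} m(c_i)\;=\;\min_{q}\ \max_{j\le r}\ \sum_{i=1}^n q_i\,K_i(E_j),
\]
where $m$ ranges over probability weightings of the atoms and $q$ over probability vectors on $\{1,\dots,n\}$. The crucial point is that the right-hand side is at least $\kappa$. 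To see this, fix $q$; by density and continuity it is enough to treat rational $q$, say $q_i=t_i/N$ with $t_i$ nonnegative integers and $N=\sum_i t_i$. Form the sequence $s$ in $C$ that lists each $c_i$ exactly $t_i$ times, so that $s$ has length $N$. By the definition of the intersection number, $\kappa_s\ge\kappa$, so some point --- equivalently some atom $E_j$ --- lies in at least $\kappa N$ of the terms counted with multiplicity, i.e. $\sum_{i}t_i K_i(E_j)\ge\kappa N$, which is $\sum_i q_i K_i(E_j)\ge\kappa$. Hence the minimax value is $\ge\kappa$, so there is a weighting $m$ of the atoms with $m(c_i)\ge\kappa$ for all $i$.

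Finally I would turn this weighting on atoms into a genuine finitely additive measure on all of $B$: choosing a point $x_j$ in each atom $E_j$ and setting $m=\sum_j m(E_j)\,\delta_{x_j}$ (a convex combination of point masses $\delta_{x}(b)=1$ iff $x\in b$) gives a finitely additive probability measure on $B$ with $m(c_i)=\sum_{E_j\subseteq c_i} m(E_j)\ge\kappa$ for every $i$, since each atom lies wholly inside or wholly outside each $c_i$. This establishes the finite intersection property and completes the proof. The only genuinely delicate point is the minimax step; it could equally be phrased as an instance of linear programming duality, or derived from the Hahn--Banach theorem applied to simple functions, but in every guise the engine is the multiplicity trick above, which converts the single numerical invariant $\kappa$ into a lower bound valid against every mixed strategy $q$.
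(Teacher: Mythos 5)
Your proposal is correct, but it follows a genuinely different route from the paper's. The paper applies the Hahn--Banach theorem once, globally: it forms the convex hull $G$ of the characteristic functions $K_c$, $c\in C$, proves that $\norm{g}\ge\kappa$ for every $g\in G$, builds the convex set $Q=\{\alpha(g-\kappa)+f: g\in G,\ \alpha\ge 0,\ f\ge\0\}$ and the Minkowski gauge $p$ of $U-Q$ for a small ball $U$, and extracts a linear functional $F$ with $F(\1)=1$ and $F\ge 0$ on $Q$; the measure is then $m(b)=F(K_b)$. You instead solve the problem on each finite subfamily $c_1,\dots,c_n$ by the minimax theorem (equivalently, finite linear programming duality) applied to the atoms of the generated finite subalgebra, and then glue the finite solutions together using Tychonoff compactness of $[0,1]^B$ and the finite intersection property of the closed sets $F_c$. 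The combinatorial heart is identical in both arguments: your verification that $\max_j\sum_i q_i K_i(E_j)\ge\kappa$ for every mixed strategy $q$ --- first for rational $q$ by listing each $c_i$ with multiplicity and invoking the definition of $\kappa_s$, then for all $q$ by approximation --- is precisely the paper's lemma that $\norm{g}\ge\kappa$ for all $g$ in the convex hull, specialized to a finite subfamily; and your point-mass extension $m=\sum_j m(E_j)\,\delta_{x_j}$ correctly promotes a weighting of atoms to a finitely additive measure on all of $B$. What your route buys is that the duality step is finite-dimensional and elementary, at the cost of an extra compactness argument at the end (which, like Hahn--Banach, is a weak choice principle); the paper's route produces the functional in one stroke on the whole space of bounded functions on $S$. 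Both are complete and standard proofs of Kelley's theorem.
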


\begin{corollary}
If a Boolean algebra $B$ has a fragmentation $\{C_n\}$ such that
each $C_n$ has a positive intersection number, then $B$
carries a strictly positive finitely additive measure.
\end{corollary}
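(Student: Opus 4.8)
The plan is to apply Kelley's Theorem separately to each level $C_n$ of the fragmentation and then splice the resulting measures together into a single one by a weighted sum. First, for each $n$ let $\kappa_n>0$ be the intersection number of $C_n$. By Kelley's Theorem there is a finitely additive measure $m_n$ on $B$, which we may take to be normalized so that $m_n(\1)=1$, satisfying $m_n(c)\ge\kappa_n$ for every $c\in C_n$. The measure we seek will be the convex combination
\[
   m=\sum_{n=1}^{\infty}\frac{1}{2^{n}}\,m_n.
\]

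Next I would verify that $m$ is a well-defined finitely additive measure. Since $0\le m_n(a)\le 1$ for every $a$, the series defining $m(a)$ converges and $0\le m(a)\le 1$, with $m(\0)=0$ and $m(\1)=\sum_n 2^{-n}=1$. Monotonicity is inherited term by term from the $m_n$. For finite additivity, if $a\cap b=\0$ then $m_n(a\cup b)=m_n(a)+m_n(b)$ for each $n$, and since the series involved are absolutely convergent I may rearrange and regroup to obtain $m(a\cup b)=m(a)+m(b)$. This interchange of the summation over $n$ with the Boolean addition is the only point requiring any care, and it is routine.

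Finally, strict positivity is the place where the fragmentation hypothesis is actually used, and it is the crux of the argument. Given any $a\in B^{+}$, the defining property $\bigcup_n C_n=B^{+}$ furnishes an index $n$ with $a\in C_n$; then $m_n(a)\ge\kappa_n$, and therefore
\[
   m(a)\ge\frac{1}{2^{n}}\,m_n(a)\ge\frac{\kappa_n}{2^{n}}>0 .
\]
Hence $m(a)>0$ for every $a\neq\0$, so $m$ is the desired strictly positive finitely additive measure. I do not expect a genuine obstacle: the substantive work lies entirely in Kelley's Theorem, which is already available, and the corollary is just the observation that a positive intersection number on every level, together with $\bigcup_n C_n=B^{+}$, lets one patch the level-by-level measures into a single one that assigns positive mass to every nonzero element.
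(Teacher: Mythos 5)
Your proof is correct and follows exactly the paper's argument: apply Kelley's Theorem to each $C_n$ to obtain $m_n$ positive on $C_n$, and set $m(a)=\sum_n m_n(a)/2^n$. The verifications you spell out (finite additivity of the sum, and strict positivity from $\bigcup_n C_n=B^+$) are precisely the routine checks the paper leaves implicit.
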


\begin{proof} (of Corollary) For each $n$ let $m_n$ be positive
on $C_n$. If we let $m(a)=\sum_n m_n(a)/2^n$, $m$ is a strictly
positive, finitely additive measure on $B$.
\end{proof}

To prove Kelley's Theorem and construct a finitely additive measure on $B$
we shall consider the vector space of all bounded functions on $S$
(including all characteristic functions $K_a$ for all $a\in B$) and
find a linear functional $F$ such that $F(\1)=1$, $F(K_a)\ge 0$, and
$F(K_c)\ge \kappa$ for all $c\in C$. Then we let $m(a)=F(K_a)$
for all $a\in B$.

To find the linear functional we use the Hahn-Banach Theorem
(for a proof, see Appendix):

\begin{theorem}
Let $p$ be a function such that $p(x)\ge 0$ for all $x$,
$p(x+y)\le p(x)+p(y)$, $p(\alpha\,x)=\alpha\,p(x)$ for all
$\alpha\ge 0$, and $p(\1)\ge 1$.

Then there exists a linear functional $F$ such that $F(\1)=1$
and $F(x) \le p(x)$ for all $x$.
\end{theorem}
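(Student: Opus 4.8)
The plan is to realize $F$ as an extension of a linear functional defined on the one-dimensional subspace spanned by $\1$, via the classical Hahn--Banach extension argument; the two hypotheses on $p$ beyond subadditivity and positive homogeneity (namely $p\ge 0$ and $p(\1)\ge 1$) serve precisely to launch that extension with the normalization $F(\1)=1$. First I would set $M_0=\{\alpha\1:\alpha\in\mathbb{R}\}$ and define $f_0(\alpha\1)=\alpha$. This $f_0$ is linear on $M_0$ and is already dominated by $p$: for $\alpha\ge 0$, positive homogeneity together with $p(\1)\ge 1$ gives $p(\alpha\1)=\alpha\,p(\1)\ge\alpha=f_0(\alpha\1)$, while for $\alpha<0$, nonnegativity gives $p(\alpha\1)\ge 0>\alpha=f_0(\alpha\1)$. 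Thus $f_0\le p$ on $M_0$ and $f_0(\1)=1$.

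The heart of the argument is the one-step extension lemma: if $f$ is linear and satisfies $f\le p$ on a subspace $M$, and $z\notin M$, then $f$ extends to $M\oplus\mathbb{R}z$ with the domination preserved. Writing the extension as $F(x+tz)=f(x)+tc$ for $x\in M$ and $t\in\mathbb{R}$, the requirement $F\le p$ becomes the search for a real number $c$ with $f(x)+tc\le p(x+tz)$ for all $x$ and $t$. Using positive homogeneity to divide by $|t|$, this splits into an upper constraint $c\le p(y+z)-f(y)$ and a lower constraint $c\ge f(y)-p(y-z)$, each ranging over $y\in M$. Such a $c$ exists exactly when
\[
\sup_{y\in M}\bigl(f(y)-p(y-z)\bigr)\;\le\;\inf_{x\in M}\bigl(p(x+z)-f(x)\bigr),
\]
and this is where subadditivity enters: for all $x,y\in M$,
\[
f(x)+f(y)=f(x+y)\le p(x+y)\le p(x+z)+p(y-z),
\]
which rearranges to $f(y)-p(y-z)\le p(x+z)-f(x)$ and thereby separates the supremum from the infimum. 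I expect this sup-inf inequality to be the one genuinely nontrivial point; everything else is bookkeeping.

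Finally I would apply Zorn's lemma to the partially ordered set of linear functionals that extend $f_0$ and remain dominated by $p$, ordered by extension; the union of a chain is again such a functional, so a maximal element $F$ exists. Its domain must be the whole space, since otherwise the one-step lemma would produce a proper extension, contradicting maximality. Then $F$ is linear, satisfies $F(x)\le p(x)$ for all $x$, and, because it extends $f_0$, satisfies $F(\1)=f_0(\1)=1$, which completes the proof.
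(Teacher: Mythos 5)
Your proposal is correct and follows essentially the same route as the paper: the paper proves the general Hahn--Banach extension theorem in its appendix via exactly your one-step extension (with $\gamma=\sup_{y}(f(y)-p(y-u))$) plus Zorn's lemma, and the stated theorem is obtained by applying it to the functional $f_0(\alpha\1)=\alpha$ on the span of $\1$, whose domination by $p$ you verify just as one should. No gaps; your write-up merely makes explicit the base case that the paper leaves implicit.
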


In the rest of this Chapter we give a proof of Kelley's Theorem:

\begin{proof}

Let $C\subset B^+\subset P(S)$ and let $\kappa$ be the intersection number of $C$.
Let $V$ be the vector space of all bounded functions on $S$ with
the supremum norm $||f||=\sup \{|f(x)|: x\in S\}$. We shall find
a linear functional $F$ on $V$ such that $0\le F(K_a)\le 1$
for all $a\in B$, $F(\1)=1$ and $F(K_c)\ge \kappa$ for all
$c\in C$.

Consider the convex hull of the set $\{K_c: c\in C\}$:
$$G=\{\sum_{i=1}^{i=m} \alpha_i K_{c_i}: c_i\in C,\,0\le\alpha_i\le 1,\,\sum \alpha_i=1\}$$

\begin{lemma}
For every $g\in G$, $||g||\ge\kappa$.
\end{lemma}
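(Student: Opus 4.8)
The plan is to reduce the general claim to the case of rational coefficients, where the intersection number is directly visible, and then pass to arbitrary real coefficients by continuity of the norm.

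First I would record the basic identity already implicit in the discussion preceding the theorem. For any finite sequence $s=\langle c_1,\dots,c_n\rangle$ in $C$, the function $\sum_{i=1}^{n}K_{c_i}$ has norm exactly $k$, where $k$ is the largest size of a $J\subset\{1,\dots,n\}$ with $\bigcap_{i\in J}c_i\ne\0$. Indeed, $\sum_{i}K_{c_i}(x)$ counts how many of the $c_i$ contain $x$, this sum takes only finitely many values, and its maximum over $x\in S$ is precisely the size of the largest subfamily with a common point. Hence $\|\sum_{i=1}^{n}K_{c_i}\|=k=\kappa_s\cdot n\ge\kappa\cdot n$.

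Next, suppose $g=\sum_{i=1}^{m}\alpha_iK_{c_i}\in G$ has rational coefficients. Writing $\alpha_i=p_i/N$ over a common denominator $N$ (so each $p_i$ is a nonnegative integer and $\sum_i p_i=N$), let $s=\langle s_1,\dots,s_N\rangle$ be the sequence of length $N$ that lists each $c_i$ with multiplicity $p_i$. Then $N\cdot g=\sum_{j=1}^{N}K_{s_j}$, so by the identity above $\|N\cdot g\|=\kappa_s\cdot N$, and since $\kappa_s\ge\kappa$ directly from the definition of the intersection number, $\|g\|=\kappa_s\ge\kappa$. This is exactly the repeated-sequence device from the preliminary remarks, now run in reverse.

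Finally I would extend to arbitrary $g\in G$ by approximation. Given $g=\sum_i\alpha_iK_{c_i}$ with real $\alpha_i\ge 0$ and $\sum_i\alpha_i=1$, and any $\varepsilon>0$, choose rationals $\beta_i\ge 0$ with $\sum_i\beta_i=1$ and $\sum_i|\alpha_i-\beta_i|<\varepsilon$; this is possible because the rational points are dense in the simplex. Setting $g'=\sum_i\beta_iK_{c_i}\in G$, and using $\|K_{c_i}\|\le 1$, we get $\|g-g'\|\le\sum_i|\alpha_i-\beta_i|<\varepsilon$, so $\|g\|\ge\|g'\|-\varepsilon\ge\kappa-\varepsilon$ by the rational case. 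Letting $\varepsilon\to 0$ yields $\|g\|\ge\kappa$. The only genuine content is the elementary combinatorial identity of the second step; the rest is bookkeeping, so I expect no serious obstacle. The one point to watch is keeping the approximating coefficients nonnegative and summing to $1$, so that $g'$ really belongs to $G$.
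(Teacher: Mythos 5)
Your proof is correct and follows essentially the same route as the paper: reduce to rational coefficients via the repeated-sequence trick, observe that the maximum of $\sum_i K_{c_i}$ equals the largest number of terms with a common point, and pass to real coefficients by approximation. Your write-up is in fact slightly more careful than the paper's at the approximation step, where you explicitly keep the rational coefficients nonnegative and summing to $1$ and bound $\|g-g'\|$ by $\sum_i|\alpha_i-\beta_i|$.
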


\begin{proof}
First consider rational coefficients $\alpha_i$: for each
$i\le m$, $\alpha_i=l_i/n$ with $\sum_i l_i=n$, and
$g(x)=f(x)/n$ where $f=\sum_i^m l_i\cdot K_{c_i}$.

Consider the sequence $s$ in $C$
of length $n$ where each $c_i$ is repeated $l_i$ times.
By definition of $\kappa$ there are $k$ terms of $s$
with nonempty intersection such that $k/n\ge\kappa$.
Let $x$ be a point in the intersection; it follows that $f(x)\ge k$.
Hence $g(x))\ge k/n\ge\kappa$.

For arbitrary $\alpha_i$ let $\varepsilon>0$. There are rational approximations
$\beta_i$ of the $\alpha_i$ such that
$\kappa\le ||\sum \beta_i K_{c_i}||\le||\sum \alpha_i K_{c_i}||+\varepsilon$.
Hence $||g||\ge\kappa-\varepsilon$, and so $||g||\ge \kappa$.
\end{proof}

Now let
$$Q=\{\alpha(g-\kappa)+f:g\in G,\,\alpha\ge 0,\,f\ge\0\}.$$
The set $Q$ contains all $K_b$, $b\in B$, (because $K_b\ge\0$)
and is convex: if $f$ and $g$ are in $Q$ and $\alpha+\beta=1$
($\alpha,\,\beta>0$) then $\alpha f+\beta g\in Q$. Clearly,
it suffices to verify this for $f$ and $g$ in $G$ and that is easy.

Let $\delta=1-\kappa$, and let $U$ be the open ball
$\{h:||h||<\delta\}$ of radius $\delta$. Using the vector space
convention $A-B=\{a-b:a\in A,\,b\in B\}$, consider the set

$$U-Q=\{h-\alpha(g-\kappa)-f: ||h||<\delta,\,g\in G,\,\alpha\ge 0,\,f\ge\0\}.$$
The set $U-Q$ is convex, and because $\0\in Q$, we have $U\subset U-Q$,
and so for every $v\in V$ there exists a positive number $\alpha$
such that $\alpha v\in U\subset U-Q$. Now define
$$p(v)=\inf\{\gamma>0:\frac{v}{\gamma}\in U-Q\}.$$
If $\gamma=\alpha+\beta$ and $\frac{x}{\alpha},\frac{y}{\beta}\in U-Q$
then $\frac{x+y}{\gamma}=\frac{\alpha}{\gamma}(\frac{x}{\alpha})+
\frac{\beta}{\gamma}(\frac{y}{\beta})\in U-Q$ by convexity,
and so $p(x+y)\le p(x)+p(y)$.

Clearly, $p(\alpha v)=\alpha p(v)$ for all $\alpha\ge 0$.
%Finally, if $\gamma\le 1$, then $U-1/\gamma$ is the $\delta-$ball
%centered at $-1/\gamma$, and is disjoint from $Q$: this is because
%$-1/\gamma\le -1$ and every function in $Q$ is somewhere nonnegative
%by the Lemma (for every $f\in Q$, $f(x)\ge0$ for some $x\in S$.)
%Hence $\0\notin (U-1/\gamma)-Q$, and so $\1/\gamma\notin U-Q$.
%Hence $p(\1)\ge 1$.
Finally, let $\gamma\le 1$; we show that $1/\gamma\notin U-Q$, hence
$p(\1)\ge 1$. If $1/\gamma\in U-Q$ then $1/\gamma = h-f$ (and $h=f+1/\gamma$)
where $h\in U$ and $f\in Q$. Since $f\in Q$, there exists, by the Lemma,
some $x\in S$ such that $f(x)\ge 0$, and so $f(x)+1/\gamma\ge 1/\gamma\ge 1$.
But $h(x)<\delta\le 1$.

Now we apply the Hahn-Banach Theorem to this function $p$.
Note that for all $f\in Q$, $-f\in U-Q$ and hence $p(-f)\le 1$.

There exists a linear functional $F$ such that $F(\1)=1$ and
$F(x)\le p(x)$ for all $x\in V$. If $f\in Q$, then $F(-f)\le p(-f)\le 1$
and so $F(f)\ge -1$. As this is true for all $f\in Q$ and $Q$ is closed under
multiples by all $\alpha\ge 0$, it must be the case that $F(f)\ge 0$
for all $f\in Q$. In particular, $F(K_b)\ge 0$ for all $b\in B$.

Also, if $g\in G$, then $g-\kappa\in Q$ and hence $F(g-\kappa)\ge 0$,
i.e. $F(g)\ge \kappa$. Consequently, $F(K_c)\ge\kappa$ for all $c\in C$.

When we let $m(b)=F(K_b)$ for all $b\in B$, $m$ is a finitely additive measure
on $B$ with $m(c)\ge\kappa$ for all $c\in C$.

\end{proof}

\section{The Kalton-Roberts Proof}

We complete the proof by showing that Kelley's condition
applies:

\begin{theorem}
Let $B$ be a Boolean algebra that has a graded $\sigma$-bounded
cc fragmentation $\{C_n\}$. Then for every $n$, $C_n$ has a positive intersection
number.
\end{theorem}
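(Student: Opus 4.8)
The plan is to fix $n$ and to produce a single constant $\kappa=\kappa(n)>0$ such that every finite sequence $s=\langle c_1,\dots,c_N\rangle$ in $C_n$ satisfies $\kappa_s\ge\kappa$; by the observation that the infimum defining $\kappa$ is unchanged if we restrict to sequences of length $\ge n_0$, it suffices to handle sequences whose length $N$ is as large as we wish. First I would pass to a finite setting: replace $B$ by the finite subalgebra generated by $c_1,\dots,c_N$ and treat its (nonempty) atoms as the points of a finite space, so that each $c_i$ is a union of atoms and membership in each $C_m$ is inherited from $B$. In this language $\kappa_s=\max_\alpha \mathrm{mult}(\alpha)/N$, where $\mathrm{mult}(\alpha)=|\{i:\alpha\le c_i\}|$; a common point of a subfamily is exactly a nonempty atom lying below all its members, so the goal reduces to exhibiting an atom contained in at least $\kappa N$ of the $c_i$.

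Next I would extract the combinatorial content of the two hypotheses. Applying gradedness repeatedly shows that if an element of $C_n$ is partitioned into $2^j$ disjoint pieces then at least one piece lies in $C_{n+j}$: split the $2^j$ pieces into two halves, keep the half whose union is in $C_{n+1}$, and recurse. The $\sigma$-bounded cc property caps such families from the other side, since any pairwise disjoint subfamily of $C_{n+j}$ has at most $K_{n+j}$ members. Together these say that a member of $C_n$ cannot be shredded into arbitrarily many disjoint pieces that all remain in a fixed level $C_m$; this is exactly the uniform exhaustivity hypothesis that feeds the Kalton--Roberts machinery.

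The core of the argument is then a contradiction. Suppose $\kappa_s$ is tiny, so every atom has multiplicity $<\kappa N$. I would use the graded splitting to attach to each $c_i$ a representative sub-piece $r_i\le c_i$ lying in a controlled level $C_{n+j}$, with $j$ bounded in terms of the target constant, and then invoke the Kalton--Roberts combinatorial lemma: when the maximal multiplicity is small, the representatives can be selected so that a pairwise disjoint subfamily of them has more than $K_{n+j}$ members. Such a subfamily is an antichain in $C_{n+j}$ that is too large, contradicting $\sigma$-bounded cc, and thereby forcing $\kappa_s$ to be bounded below by a positive $\kappa(n)$ depending only on $K_n,K_{n+1},\dots,K_{n+j}$.

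I expect the main obstacle to be precisely this combinatorial lemma, turning ``small maximal multiplicity'' into ``a large pairwise disjoint subfamily at a \emph{bounded} level''. A naive greedy disjointification fails here: peeling off the part of $c_i$ disjoint from what has already been selected only lands it in the \emph{next} level, and iterating drives the level upward without bound, so no fixed $K_m$ is ever contradicted. The Kalton--Roberts argument instead proceeds probabilistically, choosing the representatives via a random partition of the space into $2^j$ cells (so that gradedness supplies a piece of $c_i$ in $C_{n+j}$ in some cell) and estimating the expected number of representatives that survive pairwise disjoint under the low-multiplicity assumption. Arranging that this expectation exceeds $K_{n+j}$ at a \emph{fixed} depth $j$ is the quantitative crux of the proof, and it is what determines the admissible value of $\kappa(n)$.
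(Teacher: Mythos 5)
Your reduction to atoms, your multiplicity reformulation of $\kappa_s$, and your observation that gradedness iterates (a member of $C_n$ split into pieces must drop one piece into a bounded level $C_{n+j}$) are all correct and match the setup of the paper's proof, as does the overall shape of the intended contradiction against $\sigma$-bounded cc. But the proposal has a genuine gap exactly where you say you expect one: the combinatorial lemma that turns ``every atom has multiplicity $<\kappa N$'' into ``many pairwise disjoint representatives at a \emph{fixed} level'' is the entire content of the theorem, and your sketch of it does not work as described. A random partition of the space into $2^j$ cells does give, via gradedness, a piece of each $c_i$ in $C_{n+j}$ lying in some cell, and pigeonhole puts many such pieces into one cell; but pieces of different $c_i$ landing in the same cell need \emph{not} be pairwise disjoint, so no antichain is produced and no $K_{n+j}$ is contradicted. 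Disjointness is precisely what has to be engineered, and it is where the low-multiplicity hypothesis must be used; your outline never uses that hypothesis except as a label.

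The actual Kalton--Roberts mechanism is different. One fixes $k\approx m/(30K^2)$ and $p\approx m/K$ (with $K=K_{n+2}$), and the probabilistic counting is not over partitions of the space but over families $\{A_i\}_{i\in M}$ of three-point subsets of a $p$-element index set $P$: a first-moment estimate (requiring $p/k\ge 15m/p$) shows some family satisfies the Hall condition $|\bigcup_{i\in I}A_i|>|I|$ for all $|I|\le k$, hence every such $I$ admits a system of distinct representatives $f_I$. Assuming no atom $b_J$ with $|J|>k$ is nonempty, one defines $a_{ij}=\bigcup\{b_I:|I|\le k,\ i\in I,\ f_I(i)=j\}$; injectivity of the $f_I$ makes $\{a_{ij}:i\in M\}$ pairwise disjoint for each fixed $j\in P$, so each column contributes at most $K$ members of $C_{n+2}$, hence at most $pK<m$ of all the $a_{ij}$ lie in $C_{n+2}$; yet each $c_i$ is the union of its three pieces $a_{ij}$, $j\in A_i$, and gradedness forces one of them into $C_{n+2}$ for every $i$ --- a contradiction. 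The three-set families, Hall's theorem, the definition of the $a_{ij}$ through the representatives, and the arithmetic fixing $k$ and $p$ are all missing from your proposal, and without them the claimed bound $\kappa(n)>0$ is not established.
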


To prove the theorem, we adapt the Kalton-Roberts proof,
an ingenious combinatorial argument that converts finite bounds
for the size of antichains into positive intersection numbers.

\begin{lemma}
Let $M$ and $P$ be finite sets with $|M|=m$ and $|P|=p\le m$,
and let $k$, $3\le k \le p$ be an integer such that $p/k \ge 15\cdot m/p$.
Then there exists an indexed family $\{A_i: i\in M\}$ such that each $A_i$ is a three
point subset of $P$ and such that for every $I\subset M$ with $|I|\le k$,

$$|\bigcup_{i\in I} A_i|>|I|.$$

It follows that for every $I\subset M$ with $|I|\le k$ there exists a one-to-one
choice function $f_I$ on $\{A_i: i\in I\}$.
\end{lemma}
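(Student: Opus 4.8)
The plan is to use the probabilistic method. I would choose the three-point subsets $A_i$ at random and show that with positive probability the resulting family satisfies the expansion condition $|\bigcup_{i\in I}A_i|>|I|$ for all $I\subset M$ with $|I|\le k$. Concretely, for each $i\in M$ independently, let $A_i$ be a uniformly random three-element subset of $P$. The condition fails exactly when there is some $I$ with $|I|\le k$ and some $Y\subset P$ with $|Y|=|I|$ such that $A_i\subset Y$ for every $i\in I$. So I would bound the probability of this bad event by a union bound over all such pairs $(I,Y)$.

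First I would fix $j=|I|$ with $3\le j\le k$ (for $j\le 2$ a three-point set cannot fit inside a $j$-element $Y$, so those cases are automatic, modulo checking the boundary) and count: there are $\binom{m}{j}$ choices for $I$, there are $\binom{p}{j}$ choices for the target set $Y$, and given $Y$ the probability that a single random $A_i$ lands inside $Y$ is $\binom{j}{3}/\binom{p}{3}$. By independence the probability that all $j$ of the $A_i$ ($i\in I$) land in $Y$ is $\bigl(\binom{j}{3}/\binom{p}{3}\bigr)^{j}$. Thus the probability of the bad event is at most
\[
\sum_{j=3}^{k}\binom{m}{j}\binom{p}{j}\left(\frac{\binom{j}{3}}{\binom{p}{3}}\right)^{j}.
\]
The goal is to show this sum is strictly less than $1$ under the hypothesis $p/k\ge 15\,m/p$, i.e. $p^{2}\ge 15\,mk$, equivalently $mk\le p^{2}/15$.

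The main obstacle — and the real content of the lemma — is the estimation of that sum. I would bound the individual factors using $\binom{m}{j}\le (em/j)^{j}$, $\binom{p}{j}\le (ep/j)^{j}$, and $\binom{j}{3}/\binom{p}{3}\le (j/p)^{3}\cdot C$ for an explicit constant absorbing the cubic ratio (more precisely $\binom{j}{3}=j(j-1)(j-2)/6\le j^{3}/6$ and $\binom{p}{3}\ge (p-2)^{3}/6$, so the ratio is at most roughly $(j/(p-2))^{3}$). Collecting the $j$-th term gives something of the shape $\bigl(e^{2}m p\,j/p^{3}\bigr)^{j}$ up to constants, and substituting the hypothesis $mk\le p^{2}/15$ together with $j\le k$ should force each term below a geometric ratio like $(2/3)^{j}$, making the whole sum less than $1$. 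The constant $15$ is precisely what is tuned so this inequality goes through; the delicate part is tracking the constants honestly so that the bound $<1$ holds for every admissible $j$, using $j\ge 3$ to control the low-order terms and $k\le p$ to control the high-order ones.

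Once the expansion property $|\bigcup_{i\in I}A_i|>|I|$ is established for all $I$ with $|I|\le k$, the final sentence follows immediately from Hall's marriage theorem: for any $I\subset M$ with $|I|\le k$ and any $I'\subset I$, we have $|\bigcup_{i\in I'}A_i|>|I'|\ge|I'|$, so Hall's condition holds for the bipartite graph joining each $i\in I$ to the points of $A_i$, yielding a one-to-one (system of distinct representatives) choice function $f_I$ with $f_I(i)\in A_i$. Thus the probabilistic construction of the family $\{A_i\}$ is the crux, and Hall's theorem packages the consequence.
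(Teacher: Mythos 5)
Your proposal is correct and is essentially the paper's own argument: the paper runs the same union bound over pairs $(I,J)$ with $|I|=|J|=n$, $3\le n\le k$, bounds the $n$-th term by $\left(e^2\cdot\frac np\cdot\frac mp\right)^n$ using $\binom mn\le m^n/n!$ and $1/n!<e^n/n^n$, and closes with $e^2\cdot\frac kp\cdot\frac mp\le e^2/15<1/2$, so the sum is below $\sum_{n\ge3}2^{-n}<1$; Hall's theorem then gives the choice functions exactly as you say. The one refinement worth adopting is to bound the ratio termwise as $\binom n3/\binom p3=\frac{n(n-1)(n-2)}{p(p-1)(p-2)}\le(n/p)^3$ (valid since $n\le p$), which avoids the $(p-2)^{-3}$ loss in your estimate and lets the constant $15$ close the arithmetic cleanly.
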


The last statement of the lemma follows by Hall's ``Marriage Theorem'':

\begin{theorem}
A family $\{A_1,...,A_n\}$ of finite sets has a set of distinct representatives if and only if
$|\bigcup_{i\in I} A_i|\ge|I|$ for every $I\subset \{1,...,n\}$.
\end{theorem}

For a proof of Hall's Theorem, see Appendix.

The proof of the lemma uses a counting argument:

\begin{proof}
Consider the families $\{A_i: i\in M\}$ of three point subsets of $P$.
Let us call such a family \emph{bad} if $|\bigcup_{i\in I} A_i|\le |I|$
for some $I\subset M$, $|I|\le k$. If a family is bad then for some $n$, $3\le n\le k$,
there exist sets $I\subset M$ and $J\subset P$, $|I|=|J|=n$ such that $A_i
\subset J$ for every $i\in I$.

There are $\binom p3$ three-point subsets of $P$ and $\binom n3$ three-point
subsets of $J$. Of the $\binom p3 ^n$ families $\{A_i\}_{i\in I}$ with domain $I$, $\binom
n3 ^n$ are such that $\bigcup_{i\in I} A_i\subset J$. The ratio
of such families (for $3\le n \le p$) is
$(\binom n3/\binom p3)^n \le (n^3/p^3)^n=n^{3n}/p^{3n}$ because
$\binom n3/\binom p3\le n^3/p^3$.

Because there are $\binom mn$ subsets $I\subset M$ of size $n$
and $\binom pn$ subsets $J\subset P$ of size $n$, the probability
that a family $\{A_i\}_{i\in M}$ is bad is at most

$$\Pi=\sum_{n=3}^{n=k} \binom mn \binom pn \frac{n^{3n}}{p^{3n}}.$$
We have $\binom mn \cdot \binom pn \cdot n^{3n}/p^{3n} \le
m^n/n! \cdot p^n/n!\cdot n^{3n}/p^{3n}$. Using $e^x>x^n/n!$ we get
$e^n n!>n^n$, hence $1/n!<e^n/n^n$, and so
$$\frac{m^n}{n!}\cdot \frac{p^n}{n!} \cdot \frac{n^{3n}}{p^{3n}} <
\frac{e^{2n}n^n m^n}{p^{2n}}=(e^2 \cdot \frac{n}{p}\cdot \frac{m}{p})^n.$$

For $n\le k$ we have $e^2\cdot n/p\cdot m/p\le e^2\cdot k/p\cdot m/p\le
e^2/{15}<1/2$ because we assumed $p/k\ge 15m/p$ and because
$2e^2<15$. Therefore
$$\Pi <\sum_{n=3}^{n=k}\left(\frac 12\right)^n <1.$$
Consequently, there exists a family $\{A_i: i\in M\}$ that is not bad,
and so $|\bigcup_{i\in I}|>|I|$ for every $I\subset M$ of size $\le k$.

\end{proof}

We shall now apply the Kalton-Roberts method to prove Theorem 4.1:

\begin{proof}
Let $\{C_n\}$ be a graded $\sigma$-bounded cc fragmentation
of a Boolean algebra $B$, and let us fix an integer $n$. We prove that
the intersection number of each $C_n$ is positive, namely $\ge 1/(30K^2)$
where $K=K_{n+2}$ is the maximal size of an antichain in $C_{n+2}$.

We show that for every $m\ge 100K^2$, and every sequence
$\{c_1,...,c_m\}$ in $C_n$ there exists  some $J\subset m$
of size $\ge m/(30K^2)$ such that $\bigcap_{i\in J} c_i$ is nonempty.

Let $M=\{1,...,m\}$ with $m\ge 100K^2$ and let $c_1,...,c_m\in C_n$.
For each $I\subset M$, let

$$b_I = \bigcap\{c_i:i\in I\}\cap\bigcap\{-c_i:i\notin I\}.$$ The
sets $b_I$ are pairwise disjoint (some may be empty) and
$\bigcup\{b_I: I\subset M\}=\1$. Note that for each $i\in M$,
$\bigcup\{b_I: i\in I\}=c_i$. We shall find a sufficiently large
set $J\subset M$ with nonempty $b_J$.

We shall apply Lemma 4.2. First let $k\ge 3$ be the largest $k$ such that
$k/m < 1/(30K^2)$ (there is such because $3/m\le 3/(100K^2).$
We have $k<m$ and $(k+1)/m\ge 1/(30K^2)$. Then let $p$ be the largest $p\ge k$
such that $p/m<1/K$ (there is such because $k/m<1/K$.)

We verify the assumption of the lemma, $p/k\ge15m/p$
(using $p/(p+1)\ge 3/4$):
$$\frac{p}{k}=\frac{p}{p+1}\cdot \frac{p+1}{m}\cdot\frac{m}{k}\ge \frac{3}{4}\cdot \frac{1}{K}\cdot 30K^2\ge 20K$$
and
$$15\frac{m}{p}\le 15\cdot\frac{p+1}{p}\cdot \frac{m}{p+1}\le 15\cdot \frac{4}{3}\cdot K=20K.$$

Now we apply the Lemma: Let $P=\{1,...,p\}$. There exist three point sets $A_i\subset P$,
$i\in M$, and one-to-one functions $f_I$ on all $I\subset M$ of size $\le k$
with $f_I(i)\in A_i$ for all $i\in I$.

We shall prove that there exists a $J\subset M$ of size $\ge k+1$ (and hence $\ge m/(30K^2)$) such that
$b_J$ is nonempty. By contradiction, assume that there is no such $J$. Then
$$\bigcup\{b_I:|I|\le k\}=\1 \text{ and for each } i\in M, \, c_i=\bigcup\{b_I:|I|\le k \text { and } i\in I\}.$$
For each $i\in M$ and $j\in P$ let
$$a_{ij}=\bigcup\{b_I: |I|\le k,\, i\in I \text{ and } f_I(i)=j\}.$$
Note that for each $i\in M$, $c_i =a_{i,j_1}\cup a_{i,j_2}\cup a_{i,j_3}$ where $A_i=\{j_1,j_2,j_3\}$.

Let $j\in P$. We claim that the $a_{ij}$, $i\in M$, are pairwise disjoint:
If $a_{i_1,j}\cap a_{i_2,j}$ is nonempty, then because the $b_I$ are pairwise disjoint
there is some $I$ such that $i_1\in I$ and $i_2\in I$, and because
$f_I(i_1)=j=f_I(i_2)$ and $f_I$ is one-to-one, we have $i_1=i_2$.
Hence the $a_{ij}$, $i\in M$, are pairwise disjoint, and so only
at most $K$ of them belong to $C_{n+2}$.

Consequently, at most $p\cdot K$ of the $a_{ij}$ belong to
$C_{n+2}$ and because $pK<m$, there exists an $i$ such that
$a_{ij}\notin C_{n+2}$ for all (three) $j\in A_i$.

But then $c_i=a_{i,j_1}\cup a_{i,j_2}\cup a_{i,j_3}\notin C_n$ because
the fragmentation is graded. This contradicts the assumption that $c_i\in C_n$.

\end{proof}

\section{Appendix}

\begin{theorem} Let $V$ be a real vector space and let
$p$ be a function on $V$ such that $p(x)\ge 0$ for all $x$,
$p(x+y)\le p(x)+p(y)$, $p(\alpha\,x)=\alpha\,p(x)$ for all
$\alpha\ge 0$.

Let $W$ be a subspace of $V$ and $f$ a linear functional on $W$
such that $f(x)\le p(x)$ on $W$.

Then there exists a linear functional $F$ such that $F(x)=f(x)$
for all $x \in W$, and
$F(x) \le p(x)$ for all $x$.
\end{theorem}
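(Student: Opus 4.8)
The plan is to follow the classical two-part strategy: first establish a \emph{one-step extension} lemma, and then invoke Zorn's Lemma to extend all the way up to $V$. For the one-step extension, suppose $g$ is a linear functional dominated by $p$ on some subspace $W'$ with $W\subset W'$ and $W'\neq V$, and pick a vector $v\in V\setminus W'$. I would look for a real value $c$ to assign to $g(v)$ so that the extension $\bar g(w+\alpha v)=g(w)+\alpha c$ to $W'\oplus\mathbb{R}v$ remains dominated by $p$. Splitting the required inequality $g(w)+\alpha c\le p(w+\alpha v)$ according to the sign of $\alpha$ (the case $\alpha=0$ being automatic since $g\le p$ on $W'$), and using the positive homogeneity $p(\alpha x)=\alpha p(x)$ for $\alpha\ge 0$ to rescale, this reduces to the two families of constraints
$$c\le p(w'+v)-g(w')\quad\text{and}\quad c\ge g(w'')-p(w''-v)\qquad(w',w''\in W').$$

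A suitable $c$ exists precisely when $\sup_{w''}[\,g(w'')-p(w''-v)\,]\le\inf_{w'}[\,p(w'+v)-g(w')\,]$, equivalently when $g(w')+g(w'')\le p(w'+v)+p(w''-v)$ for all $w',w''\in W'$. This is the crux of the argument, and it follows cleanly from linearity of $g$ together with subadditivity of $p$: indeed
$$g(w')+g(w'')=g(w'+w'')\le p(w'+w'')=p\bigl((w'+v)+(w''-v)\bigr)\le p(w'+v)+p(w''-v),$$
where the first inequality is the hypothesis $g\le p$ on $W'$. Thus the supremum does not exceed the infimum, a valid $c$ can be chosen from the (nonempty) interval between them, and $\bar g$ is a linear functional on $W'\oplus\mathbb{R}v$ extending $g$ and still dominated by $p$.

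With the one-step extension in hand, I would consider the poset of all pairs $(W',g)$ where $W\subset W'\subset V$ is a subspace, $g$ is linear on $W'$, $g$ extends $f$, and $g\le p$ on $W'$, ordered by extension. Every chain has an upper bound given by the union of the domains together with the common extension of the functionals along the chain, so Zorn's Lemma yields a maximal element $(W^*,F)$. If $W^*\neq V$ I could pick $v\in V\setminus W^*$ and apply the one-step extension to produce a strictly larger admissible pair, contradicting maximality; hence $W^*=V$ and $F$ is the desired functional, linear on all of $V$, agreeing with $f$ on $W$, and satisfying $F\le p$ everywhere. The only delicate point is the one-step extension, and within it the verification that the relevant supremum lies below the relevant infimum; the sign analysis, the chain-union bound, and the Zorn argument itself are all routine.
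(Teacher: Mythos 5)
Your proposal is correct and follows essentially the same route as the paper: a one-step extension justified by the inequality $g(w')+g(w'')\le p(w'+v)+p(w''-v)$ (the paper verifies the identical estimate, writing $f(x+y)\le p(x+u)+p(y-u)$ and taking a supremum to pick the specific value $\gamma=\sup_y\bigl(f(y)-p(y-u)\bigr)$ where you allow any admissible $c$), followed by Zorn's Lemma. No substantive difference.
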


See Walter Rudin: ``Functional Analysis'', Second Edition (1991), pp. 57-58.

\begin{proof}

Using Zorn's Lemma it suffices to extend $f$ one more dimension: let
$u\notin W$ and show that there exists an $F$ extending $f$ on the
subspace $\{x+\alpha u: \alpha \in \mathbf R\}$.

Let $u\notin W$. For every $x\in W$ we have $f(x)\le p(x)\le p(x-u)+p(u)$ and so

$$f(x)-p(x-u)\le p(u).$$

Let $\gamma$ be the supremum of the left hand side of the inequality
taken over all $x\in W$. We let $F(u)=\gamma$, and $F(x+\alpha u)=
f(x)+\alpha \gamma$, for $x\in W$ and $\alpha\in \mathbf R$. $F$ is a linear
functional and it remains to show that $F(x+\alpha u)\le p(x+\alpha u)$.
Note that this follows from these two inequalities:

$$f(x)+\gamma \le p(x+u),$$
$$f(x)-\gamma \le p(x-u).$$

(Then use $x\pm \alpha u = \alpha (x/\alpha \pm u)$ for $\alpha >0$).

The second inequatity is immediate. For the first one, note that for
every $y\in W$, $f(x+y)\le p(x+u)+p(y-u)$, hence $f(x+y)-p(y-u)\le p(x+u)$,
and so
$$ f(x)+\gamma = f(x)+\sup_y(f(y)-p(y-u))=\sup_y(f(x+y)-p(y-u))\le p(x+u).$$

\end{proof}

\section{Appendix}

\begin{theorem}
A family $\{A_1,...,A_n\}$ of finite sets has a set of distinct representatives if and only if
$|\bigcup_{i\in I} A_i|\ge|I|$ for every $I\subset \{1,...,n\}$.
\end{theorem}

See B\'ela Bollob\'as: ``Modern Graph Theory'' (1998), pp. 77-78.

\begin{proof}

The condition is clearly necessary; we prove the sufficiency by induction.
As $n=1$ is obvious, assume that the theorem is true for all $k<n$ and
let $\{A_1,...,A_n\}$ be a sequence of finite sets that satisfy the condition.

Suppose first that for any $k<n$, for any $I\subset \{1,...n\}$ with $|I|=k$,
$|\bigcup_{i\in I} A_i|\ge k+1$. Then we choose $a_n\in A_n$ arbitrarily,
and apply the induction hypothesis to the family $\{A_1-\{a_n\},...,A_{n-1}-\{a_n\}\}$
to get distinct representatives $a_1,...a_{n-1}$ for $A_1,...,A_{n-1}$.

Otherwise, there exists a set $S\subset\{1,...,n\}$ of size $k<n$ such that
$|\bigcup_{i\in S}A_i|= k$. Let $A=\bigcup_{i\in S}A_i$. Consider the family
$\{A_i - A : i\notin S\}$. This family of $n-k$ sets satisfies the condition of the
theorem: if $I\subset \{1,...,n\}-S$ has size $m$ then $|\bigcup_{i\in I} A_i-A|\ge m$
because $|\bigcup_{i\in I\cup S} A_i|\ge k+m$. Thus the family has a set of distinct
representatives, all $\notin A$, and since $\{A_i : i\in S\}$ has distinct representatives
in $A$, we can combine these two sets.

\end{proof}

\section{Appendix}

\begin{definition}
A \emph{a continuous strictly positive submeasure} on a Boolean
$\sigma$-algebra $B$ is a real valued function $m$ on $B$ such
that
\begin{itemize}
\item [(i)] $m(\mathbf 0) = 0,\ m(a) > 0 \text{ for } a \not = \mathbf 0
        \text{, and } m(\mathbf 1) = 1 $
\item [(ii)]$ m(a)\le m(b) \text{ if } a\subset b$
\item [(iii)] $m(a \cup b)\le m(a) + m(b)$
\item [(iv)] if  $\{a_n\}_n$ is a decreasing sequence in $B$ with
$\bigwedge_{n=1}^\infty a_n = \0$ then $\lim_n m(a_n)=\0.$

\end{itemize}

A \emph{Maharam algebra} is a Boolean $\sigma$-algebra that
carries a continuous strictly positive submeasure.

\end{definition}

\begin{theorem}(Balcar-Jech)
$B$ is a Maharam algebra if and only if it is uniformly weakly distributive.
\end{theorem}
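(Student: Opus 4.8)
The plan is to prove the two implications separately. The forward one is a direct imitation of the argument, already given in the excerpt, that a measure algebra is uniformly weakly distributive; the reverse one is the promised construction of a submeasure from a fragmentation.

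For the forward direction, suppose $m$ is a continuous strictly positive submeasure. First I would record two consequences of the axioms. That $m$ is continuous from below: if $u_N\uparrow u$ then $u\cap(-u_N)\downarrow\0$, so (iv) gives $m(u\cap(-u_N))\to 0$, and from $u\le u_N\cup(u\cap(-u_N))$ together with (iii) and monotonicity $m(u_N)\uparrow m(u)$. Hence $m$ is countably subadditive, $m(\bigvee_k c_k)\le\sum_k m(c_k)$. From countable subadditivity and strict positivity it follows, exactly as for measures, that $\sum_k m(c_k)<\infty$ implies $\lim c_k=\0$ (the decreasing sets $\bigvee_{k\ge N}c_k$ have submeasure tending to $0$, so $\limsup_k c_k=\0$). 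Now let $W=\{w_k\}_k$ be a countable maximal antichain. The complements $\1-\bigcup_{k\le N}w_k$ form a decreasing sequence with infimum $\0$, so by continuity their submeasure tends to $0$; I define $F_n(W)$ to be a finite initial segment with $m(\1-\bigcup F_n(W))<2^{-n}$. Then for any sequence $\{W_n\}_n$, writing $b_n=\bigcup F_n(W_n)$, we have $\sum_n m(\1-b_n)<\infty$, so $\lim(\1-b_n)=\0$, i.e. $\lim b_n=\1$; this is exactly uniform weak distributivity.

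For the reverse direction I would first invoke Lemma~2.3 to pass from uniform weak distributivity to a tight $G_\delta$ fragmentation, and then the last lemma of Section~\ref{S1} to assume it is graded. The point is that the complementary sets $U_n=\{a:a\notin C_n\}$ of ``$n$-small'' elements form a decreasing family of downward-closed sets with $\bigcap_n U_n=\{\0\}$, and that the grading property, read contrapositively, is precisely the Frink-type spacing condition $a,b\in U_{n+1}\Rightarrow a\cup b\in U_n$. I would then define the submeasure by a cover (chain) construction, $m(a)=\inf\{\sum_{j}2^{-n_j}:a\le\bigvee_j x_j,\ x_j\in U_{n_j}\}$, afterwards normalizing by $m(\1)$. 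Monotonicity, subadditivity and $m(\0)=0$ are immediate. Continuity is easy as well: a one-element cover gives $m(a)\le 2^{-(\rho(a)-1)}$, where $\rho(a)=\min\{n:a\in C_n\}$, and if $a_k\downarrow\0$ then the $G_\delta$ property forces $\rho(a_k)\to\infty$, whence $m(a_k)\to 0$.

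The hard part is strict positivity, and this is where the grading must be used in earnest. What is needed is a Frink-lemma estimate: iterating $U_{n+1}\cup U_{n+1}\subseteq U_n$ along a balanced binary tree shows that $2^k$ elements of $U_{n+k}$ have their join in $U_n$, and a rounding argument upgrades this to the statement that any family $\{x_j\}$ with $x_j\in U_{n_j}$ and $\sum_j 2^{-n_j}$ small enough (below $2^{-N}$, up to a fixed constant factor) has $\bigvee_j x_j\in U_N$. Granting this, if $a\ne\0$ with $\rho(a)=N$ then no cover of $a$ can have so small a total weight: the join of such a cover would lie in $U_N$, i.e. outside $C_N$, yet it dominates $a\in C_N$ and $C_N$ is upward closed, a contradiction. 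Thus $m(a)$ is bounded below by a positive quantity depending only on $\rho(a)$. I expect this metrization-type estimate — extracting a genuine subadditive lower bound from the merely two-fold grading condition — to be the main obstacle. Note in particular that charging each covering piece to the scale $n_j$ at which it is small, rather than to its own rank, is essential: the naive weight $2^{-\rho(a)}$ is only quasi-subadditive with a constant that degrades with the number of covering pieces, and with it the infimum can collapse to $0$.
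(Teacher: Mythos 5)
Your proposal is correct and follows essentially the same route as the paper: the forward direction repeats the measure-algebra argument (continuity gives finite subsets of each antichain whose complements have summable submeasure), and the reverse direction passes to a graded tight $G_\delta$ fragmentation and applies the Frink-type metrization lemma to the sets $U_n=B-C_n$. Your weighted-cover definition of $m$ is just a repackaging of the paper's sets $V_r=U_{n_1}\vee\dots\vee U_{n_k}$, and the ``hard'' strict-positivity estimate you flag is exactly the paper's induction $U_{n_1+1}\vee\dots\vee U_{n_k+1}\subset U_{n_1}$ (merging equal indices reduces to the distinct-index case with no constant loss).
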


\begin{proof}
The proof that a Maharam algebra is uniformly weakly distributive is exacly
the same as for a measure algebra. To show that the condition is sufficient
let $B$ be a $\sigma-$algebra and assume that $B$ has a graded $G_{\delta}$
fragmentation $\{C_n\}$. We shall define a submeasure on $B$.

For each $n$ let $U_n = B-C_n$ and $U_0=B$; we have $U_0\supset U_1\supset
...\supset U_n\supset ...$ and $\bigcap_n U_n = \{\0\}$.
For $X, Y\subset B$, let $X\vee Y$ denote the set $\{x\cup y: x\in X, y\in Y\}$.
As the fragmentation is graded, we have
$$U_{n+1}\vee U_{n+1}\subset U_n$$
for all $n\ge 0$. Cosequently, if $n_1<...<n_k$ then
$$U_{n_1+1}\vee...\vee U_{n_k +1}\subset U_{n_1}.$$
This is proved by induction on $k$.

Let $D$ be the set of all $r=\sum^k_{i=1} 1/2^{n_i}$ where $0<n_1<...<n_k$.
For each $r\in D$ we let $V_r=U_{n_1}\vee...\vee U_{n_k}$ and $V_1=U_0=B$.
For each $a\in B$ define
$$m(a)=\inf\{r\in D\cup\{1\}: a\in V_r\}.$$
Using the above property of the $U_n$, it follows that $V_r\subset V_s$ if $r<s$, and
$V_r\vee V_s\subset V_{r+s}$ when $r+s\le 1.$ From this we have $m(a)\le m(b)$
if $a\subset b$, and $m(a\cup b)\le m(a)+m(b).$

The submeasure $m$ is strictly positive because if $a\ne \0$ then for some $n$,
$a\notin U_n,$ and hence $m(a)\ge 1/2_n.$

The submeasure $m$ is continuous because the fragmentation is $G_\delta:$
If $\{a_n\}_n$ is a decreasing sequence in $B$ converging to $\0$ then for every $k,$
eventually all $a_n$ are in $U_k$, hence $m(a_n)\le 1/2^k$ for eventually all $n,$
and so $\lim_n m(a_n)=0.$

\end{proof}

\bibliographystyle{plain}

\end{document}